\providecommand{\tabularnewline}{\\}
 \theoremstyle{definition}
 \newtheorem*{defn*}{\protect\definitionname}
  \theoremstyle{remark}
  \newtheorem*{rem*}{\protect\remarkname}
  \theoremstyle{definition}
  \newtheorem*{example*}{\protect\examplename}
  \theoremstyle{plain}
  \newtheorem*{prop*}{\protect\propositionname}
  \theoremstyle{plain}
  \newtheorem*{thm*}{\protect\theoremname}
  \theoremstyle{plain}
  \newtheorem*{cor*}{\protect\corollaryname}
  \providecommand{\corollaryname}{Corollary}
  \providecommand{\definitionname}{Definition}
  \providecommand{\examplename}{Example}
  \providecommand{\propositionname}{Proposition}
  \providecommand{\remarkname}{Remark}
  \providecommand{\theoremname}{Theorem}
\begin{document}

\title{\textbf{\Huge{}Distributive Minimization Comprehensions and the Polynomial
Hierarchy}}

\author{Joaquín Díaz Boils$^{1,}$$^{\ast}$ \\ \small{$^{1}$Facultad de Ciencias Exactas y Naturales.}\\ \small{Pontificia Universidad Cat\'olica del Ecuador.} \\ \small{170150. Quito. Ecuador.}\\ \small{\texttt{$^{\ast}$boils@uji.es}} }
\maketitle
\begin{abstract}
A categorical point of view about minimization in subrecursive classes
is presented by extending the concept of Symmetric Monoidal Comprehension
to that of Distributive Minimization Comprehension. This is achieved
by endowing the former with coproducts and a finality condition for
coalgebras over the endofunctor sending $X$ to $1\oplus X$ to perform
a safe minimization operator. By relying on the characterization given
by Bellantoni, a tiered structure is presented from which one can
obtain the levels of the Polytime Hierarchy as those classes of partial
functions obtained after a certain number of minimizations.

$ $

\textbf{\emph{\normalsize{}Keywords:}}\emph{\normalsize{} }Safe Recursion,
Safe Minimization, Distributive Monoidal Categories, Polytime Hierarchy.
\end{abstract}

\section{Introduction}

The safe interpretation of recursion was introduced by Bellantoni
and Cook in \cite{key-2} and can be used to substitute the bounding
condition in the bounded recursion scheme
\[
\begin{cases}
 & f(u,0)=g(u)\\
 & f(u,x+1)=h(u,x,f(u,x))\\
 & f(u,x)\leq j(u,x)
\end{cases}
\]
under which the subrecursive function classes are closed by a syntactical
condition. The central idea of Bellantoni and Cook was to define two
different kinds of variables (\emph{normal} and \emph{safe variables})
according to the use we make of them in the process of computation.
In \cite{key-2} the class of polynomial time functions has been characterized
in safety terms. In particular, the authors define a class of functions
in the form $f(\overline{x};\overline{y})$ where each input in $f$
is called \emph{normal} or \emph{safe input}, normal inputs are in
the left and separate them from safe by making use of a semicolon. 

In its turn, the \emph{ramified recursion} is a way to avoid \emph{impredicativity}
problems. In a ramified system the objects are defined using levels
such that the definition of an object in level $i$ depends only on
levels below $i$. We will make use of some sets $\mathbb{N}_{k}$,
the \emph{levels of the natural numbers, }since they have a close
relation with different function classes according to their complexity
degree. 

Following the previous ideas, Bellantoni gives in \cite{key-1} a
characterization in safety terms of the known as \emph{Polytime Hierarchy}
as that collection of classes $\square_{i+1}^{P}$:
\begin{itemize}
\item containing the initial functions
\begin{itemize}
\item zero function
\item projections: $\pi_{j}^{m,p}(x_{1},...,x_{m};x_{m+1},...,x_{m+p})=x_{j}$
for $1\leq j\leq m+p$
\item binary successors: $s^{1}(;m)=2m$ and $s^{2}(;m)=2m+1$
\item predecessor: $p(;s^{1}(;0))=p(;s^{2}(;0))=0$ 
\item conditional modulo: $C(;a,b,c)=\begin{cases}
b & if\;a\:mod\:2=0\\
c & otherwise
\end{cases}$ 
\end{itemize}
\item closed under 
\begin{itemize}
\item safe composition: 
\[
f(x;a)=h(\overline{r}(\overline{x};);t(\overline{x};\overline{a}))
\]
\item for $n=1,2$ predicative recursion on notation:
\[
\begin{cases}
f(0,\overline{x};\overline{a})=g(\overline{x};\overline{a})\\
f(s^{n}(y),\overline{x};\overline{a})=h_{n}(y,\overline{x};\overline{a},f(y,\overline{x};\overline{a})) & for\;s^{n}(;0)\neq0
\end{cases}
\]
\end{itemize}
\item and obtained after $i$ applications of safe minimization:\footnote{The safe minimization operator is total and does not entail a notion
of partiality, this is explained in \cite{key-1}.}
\[
f(\overline{x};\overline{a})=\begin{cases}
s^{2}(\mu b.h(\overline{x};\overline{a},b)mod2=0) & if\;there\;is\;such\;b\\
0 & otherwise
\end{cases}
\]
\end{itemize}
In \cite{key-4} the author develops a categorical setting to characterize
subrecursive hierarchies in categorical terms based on the safe and
ramified interpretation of recursion referred to above. For that it
is introduced the concept of \emph{Symmetric Monoidal Comprehension}.
From that construction it is proved that one can perform functions
in a growing classification such as the \emph{Grzegorzcyk} \emph{Hierarchy.} 

The aim of this paper is to extend the results of \cite{key-4} giving
a categorical setting to study safe minimization in the context of
subrecursive classes closed under the operations of safe recursion
and composition, and it is based on former works where that operator
was not considered. In particular, the novelty of this work is to
consider two subindices rather than the one considered in \cite{key-4}.
The first subindex, ranging in $\{0,1\}$, relates to the normal and
safe variables into the functions defined by safe recursion, the second
one, belonging to the set $\{0,...,i-1\}$, is devoted to count the
number of safe minimizations computed to obtain a function in a certain
class of the Polynomial Hierarchy. 

It is known, at the same time, that initiality for algebras associated
to endofunctors $F(-)=1\oplus-$ is used to perform recursion while
finality for the same endofunctor allows to interpret minimization.
Some conditions, essentially sums, have to be added to a monoidal
category to perform that operator, the closing operation required
to get the class of partial recursive functions. In this paper Symmetric
Monoidal Comprehensions are endowed with more structure to obtain,
by means of the safe recursion scheme, a condition of distributivity
and, with the above-mentioned finality condition, safe minimization.
This gives rise to the concept of \emph{Distributive Minimization
Comprehension}, the setting from which we represent partial subrecursive
functions and the Polytime Hierarchy in particular. 

Certain endofunctors $M_{p}$ for $p\in\{0,...,i-1\}$ allow to bound
the number of times that we compute safe minimization in the finality
diagram. After all this is well established, the \emph{Freyd Cover
}plays the role of representability in the context of partial functions
(see \cite{key-6}).

The article is structured as follows: section 2 introduces the basic
concepts and that of Distributive Minimization Comprehension in particular
by giving our main example, in section 3 it is proved that distributivity
is a condition obtained from a $SRR$ scheme while section 4 deals
with safe minimization following the ideas introduced in \cite{key-7},
essentially finality for coalgebras over a certain functor. In section
5 we explain how to represent recursive functions in the free Distributive
Minimization Comprehension and which are the important features satisfied
by it. Finally, in section 6 some conclusions and lines for further
development are given.

\section{Basic structures}

We begin by considering the categories $\Delta^{op}(\mathbf{i},\mathbf{i})$
and $\Delta^{op}(\mathbf{2},\mathbf{2})$, where $\Delta$ is the
simplicial category, as the monoids of endofunctors in $\mathbf{i}$
and $\mathbf{2}$. That is, the categories with objects the natural
numbers lower than $i$ and $2$ and arrows $0\rightarrow1\rightarrow\cdots\rightarrow i-1$
and $0\rightarrow1$ respectively. 
\begin{defn*}
Let be:
\begin{itemize}
\item the functors $T$ and $G$ in $\Delta^{op}(\mathbf{2},\mathbf{2})$
such that $Tk=1$ and $Gk=0$ for $k=0,1$, 
\item for every $p,m\in\mathbf{i}$ the functors $M_{p}$ in $\Delta^{op}(\mathbf{i},\mathbf{i})$
such that 
\[
M_{p}(m)=\left\{ \begin{array}{ll}
p+1 & \text{if }m=p\\
m & \text{if }m\neq p
\end{array}\right.
\]
\item for all $k\in\mathbf{2}$ and $\epsilon:G\Longrightarrow id$ and
$\eta:id\Longrightarrow T$ natural transformations such that
\[
\epsilon(k)=\left\{ \begin{array}{ll}
id_{1} & \text{if }k\neq1\\
0\rightarrow1 & \text{if }k=1
\end{array}\right.\qquad\eta(k)=\left\{ \begin{array}{ll}
id_{0} & \text{if }k\neq0\\
0\rightarrow1 & \text{if }k=0
\end{array}\right.
\]
\end{itemize}
\end{defn*}
A category have the same certain bicategorical property than another
category if the same commutative diagrams are satisfied for both of
them, that is, if there exists a bifunctor between them. For the definition
of \emph{Distributive i-Minimization Comprehension} we consider certain
properties that one category \emph{inherits} from other. This is the
basic categorical structure from which we will develop recursion for
subrecursive (partial) function classes. We endow a categorical structure
with initial diagrams and recursive operators.

In the following the indices range as indicated here: $p,q\in\mathbf{i}$,
$k\in\mathbf{2}$, $n=1,2$ and $\alpha\in\mathbb{N}$. 
\begin{defn*}
A \emph{Distributive i-Minimization Comprehension}, denoted in the
sequel by $(\mathcal{C},T^{\mathcal{C}},G^{\mathcal{C}},\eta^{\mathcal{C}},\epsilon^{\mathcal{C}},M_{p}^{\mathcal{C}})$ 
\begin{enumerate}
\item consists of:
\begin{itemize}
\item a symmetric monoidal category with coproducts $\mathcal{C}$,\footnote{We denote the elements of that structure by $\oplus,in_{r},in_{l},\otimes,\top,l$
and express the objects \emph{modulo associativity }and \emph{symmetry}
in the sequel.}
\item the\emph{ }functors $T^{\mathcal{C}},G^{\mathcal{C}},M_{p}^{\mathcal{C}}:\mathcal{C}\longrightarrow\mathcal{C}$
preserving $\otimes$ and $\oplus$ on the nose\footnote{Preservation \emph{on the nose} means for us equations such as $T^{\mathcal{C}}(A\otimes B)=T^{\mathcal{C}}A\otimes T^{\mathcal{C}}B$,
$T^{\mathcal{C}}(f\otimes B)=T^{\mathcal{C}}f\otimes T^{\mathcal{C}}B$,
$T^{\mathcal{C}}\top=\top$ etc. and same for $G^{\mathcal{C}}$ and
$M_{p}^{\mathcal{C}}$.},
\item natural transformations $\eta^{\mathcal{C}}:id\Longrightarrow T^{\mathcal{C}}$
and $\epsilon^{\mathcal{C}}:G^{\mathcal{C}}\Longrightarrow id$,
\item bifunctors $\Im_{Rec}:\Delta^{op}(\mathbf{2},\mathbf{2})\rightarrow(\mathcal{C},\mathcal{C})$
and $\Im_{Min}:\Delta^{op}(\mathbf{i},\mathbf{i})\rightarrow(\mathcal{C},\mathcal{C})$
such that\footnote{For both $\Im$ to exist we are looking at $\Delta^{op}(\mathbf{i},\mathbf{i})$
and $\Delta^{op}(\mathbf{2},\mathbf{2})$ as bicategories with a unique
0-cells $\mathbf{i}$ and $\mathbf{2}$ respectively.} 
\[
\begin{array}{c}
\Im_{Rec}(T)=T^{\mathcal{C}}\qquad\Im_{Rec}(G)=G^{\mathcal{C}}\\
\\
\Im_{Rec}(\eta)=\eta^{\mathcal{C}}\qquad\Im_{Rec}(\epsilon)=\epsilon^{\mathcal{C}}\qquad\Im_{Min}(M_{p})=M_{p}^{\mathcal{C}}
\end{array}
\]
\end{itemize}
\item containing an object $N_{0,p}$ and three arrows $0_{0,p}$, $s_{0,p}^{1}$
and $s_{0,p}^{2}$ with\emph{ initial diagrams} 
\[
\top\overset{0_{0,p}}{\longrightarrow}N_{0,p}\overset{s_{0,p}^{1}}{\longrightarrow}N_{0,p}\qquad\top\overset{0_{0,p}}{\longrightarrow}N_{0,p}\overset{s_{0,p}^{2}}{\longrightarrow}N_{0,p}
\]
for binary numbers. We define recursively the objects $N_{1,p}$ by
the rule
\[
N_{1,p}=G^{\mathcal{C}}N_{0,p}
\]
and morphisms $0_{1,p}$, $s_{1,p}^{1}$ and $s_{1,p}^{2}$ defined
by $0_{1,p}=G^{\mathcal{C}}(0_{0,p})$ and $s_{1,p}^{1}=G^{\mathcal{C}}(s_{0,p}^{1})=G^{\mathcal{C}}(s_{0,p}^{2})$
giving initial diagrams for $N_{1,p}$. We also have in $\mathcal{C}$

\[
\begin{array}{c}
T^{\mathcal{C}}N_{0,p}=\top\qquad T^{\mathcal{C}}N_{1,p}=N_{1,p}\end{array}
\]
\[
G^{\mathcal{C}}N_{1,p}=N_{1,p}
\]
As well as 
\[
M_{p}^{\mathcal{C}}N_{k,q}=\begin{cases}
\top & \textrm{if \ensuremath{p=q=0}}\\
N_{k,p-1} & \textrm{if \ensuremath{p=q\neq0}}\\
N_{k,q} & \textrm{otherwise}
\end{cases}
\]

\item closed under\emph{ }
\begin{itemize}
\item \emph{flat recursion}:

\noindent for all morphisms 
\[
g:X\longrightarrow Y\textrm{ and }h:N_{0,p}\otimes X\longrightarrow Y
\]
where $X$ and $Y$ are in the form $N_{0,p}^{\alpha}$ there exist
a unique 
\[
FR(g,h):N_{0,p}\otimes X\longrightarrow Y
\]
in $\mathcal{C}$ such that the following diagrams commute
\[
\xymatrix{\top\otimes X\ar[rr]^{0_{0,p}\otimes X}\ar[drr]_{g\circ l} &  & N_{0,p}\otimes X\ar[d]^{FR(g,h)} &  & N_{0,p}\otimes X\ar[dll]^{h}\ar[ll]_{s_{0,p}^{n}\otimes X}\\
 &  & Y
}
\]
 
\item \emph{safe ramified recursion}:

\noindent for all morphisms 
\[
g:X\longrightarrow Y\textrm{ and }h:Y\longrightarrow Y
\]
where $Y$ belongs to the fiber of $T^{\mathcal{C}}$ over $\top$
there exist a unique 
\[
SRR(g,h):N_{1,p}\otimes X\longrightarrow Y
\]
in $\mathcal{C}$ such that the following diagram commutes 
\[
\xymatrix{\top\otimes X\ar[rr]^{0_{1,p}\otimes X}\ar[d]_{l} &  & N_{1,p}\otimes X\ar[rr]^{s_{1,p}^{n}\otimes X}\ar[d]^{SRR(g,h)} &  & N_{1,p}\otimes X\ar[d]^{SRR(g,h)}\\
X\ar[rr]_{g} &  & Y\ar[rr]_{h} &  & Y
}
\]

\end{itemize}
\item and such that every arrow $0_{k,p}\oplus s_{k,p}^{n}$ is an isomorphism
such that the pair 
\[
(N_{k,p},(0_{k,p}\oplus s_{k,p}^{n})^{-1})
\]
is a \emph{bounded terminal coalgebra} for the endofunctor $1\oplus-$
in $\mathcal{C}$ in the following sense:

\noindent for arrows $h_{1},h_{2}:A\rightarrow1\oplus A$ and $f:A\longrightarrow N_{k,p-1}$
there is a unique $\mu f:A\longrightarrow N_{k,p}$ such that the
following diagram commute
\noindent \begin{center}
\hspace{2em}\xymatrix{A\ar[rrr]^-{h_{n}}\ar[d]_-{\mu f} & & & 1\oplus A\ar[d]^-{1\oplus\mu f}\\N_{k,p}\ar[rrr]_-{(0_{k,p}\oplus s_{k,p}^{n})^{-1}} & & & 1\oplus N_{k,p}}
\par\end{center}

\noindent \begin{flushleft}
where $N_{k,p}$ belongs to the fiber of $M_{i-1}^{\mathcal{C}}...M_{0}^{\mathcal{C}}$
over $\top$.
\par\end{flushleft}

\end{enumerate}

\end{defn*}
Flat recursion schemes are actually coproduct diagrams from which,
by applying $G$, we obtain flat recursion also for $N_{1,p}$, they
give the initial diagrams appropriate properties such as the injectivity
of \emph{successor functions} $s^{n}$. 

Moreover, flat recursion schemes allow to define the predecessor function
$p$ given in the Introduction as $FR(0,id)$ as well as the conditional
modulo function $C$ with the help of the \emph{conditional on test
for zero} function $Z$:
\[
\xymatrix{\top\otimes N_{0,p}\ar[rr]^{0_{1,p}\otimes N_{0,p}}\ar[drr]_{\pi_{0}\pi_{1}\circ l} &  & N_{1,p}\otimes N_{0,p}\ar[d]^{Z} &  & N_{1,p}\otimes N_{0,p}\ar[dll]^{\pi_{1}\pi_{1}}\ar[ll]_{s_{1,p}^{n}\otimes N_{0,p}}\\
 &  & N_{0,p}
}
\]
Then, the composition of $Z$ with $mod\:2\:(a)=a\:mod\:2$ defined
by
\[
\xymatrix{\top\ar[rr]^{0_{1,p}}\ar[drr]_{0_{0,p}\circ l} &  & N_{1,p}\ar[d]^{mod\:2}\ar[rr]^{s_{1,p}^{n}} &  & N_{1,p}\ar[d]^{mod\:2}\\
 &  & N_{0,p}\ar[rr]_{1\dot{-}} &  & N_{0,p}
}
\]
gives the conditional modulo function $C$ where $1\dot{-}$ is the
function $a\mapsto1\dot{-}a$ and $\dot{-}$ is the non-negative substraction.
\begin{rem*}
To define $Z$ we have made use of projections which are not at our
disposal unless we are in the context of a cartesian category. But
this is precisely the case for the \emph{free Distributive i-Minimization
Comprehension} defined in section 5 (see Theorem \ref{cartesian}).
\end{rem*}
Condition 4. gives a \emph{safe minimization operator} as explained
in section 3 for the following example. The bounding condition for
that operator over the codomain of $\mu f$ ensures that we do not
compute safe minimization more than $i$ times.
\begin{example*}
\label{ex}Our example of Distributive i-Minimization Comprehension
consists of defining that structure for a presheaf over the category
of sets and partial functions $Set_{P}$. 

Consider the category $Set_{P}^{2\times i}$. Its objects are squares
formed by chains of sets indexed by $\boldsymbol{2}\times\mathbf{i}$:
\textbf{
\[
\xymatrix{X_{0,0}\ar[r]\ar[d] & X_{0,1}\ar[r]\ar[d] & \cdots\ar[r] & X_{0,(i-1)}\ar[d]\\
X_{1,0}\ar[r] & X_{1,1}\ar[r] & \cdots\ar[r] & X_{1,(i-1)}
}
\]
}and its arrows cubes built out of them.

\end{example*}
\begin{itemize}
\item $Set_{P}^{2\times i}$ is a symmetric monoidal category with coproducts,
\item it has as terminal object chains\textbf{
\[
\xymatrix{1\ar[r]\ar[d] & 1\ar[r]\ar[d] & \cdots\ar[r] & 1\ar[d]\\
1\ar[r] & 1\ar[r] & \cdots\ar[r] & 1
}
\]
}denoted by $1^{2\times i}$ where $1$ is any set with a single object
and
\item for $p\in\mathbf{i}$:

\begin{itemize}
\item $0_{k,p}$ give rise to $p-1$ cubes as in the left and $i-p+1$ cubes
such as the one at right:

{\tiny{}
\[
\xymatrix{{1}\ar[rr]\ar[dd]\ar[dr] & \mbox{} & {1}\ar'[d][dd]\ar[dr]\\
\mbox{} & {\mathbb{N}}\ar'[rr]\ar[dd] &  & {\mathbb{N}}\ar[dd]\\
{1}\ar'[r][rr]\ar[dr] &  & {1}\ar[dr]\\
 & {\mathbb{N}}\ar[rr] &  & {\mathbb{N}}
}
\qquad\xymatrix{{1}\ar[rr]\ar[dd]\ar[dr] & \mbox{} & {1}\ar'[d][dd]\ar[dr]\\
\mbox{} & {1}\ar'[rr]\ar[dd] &  & {1}\ar[dd]\\
{1}\ar'[r][rr]\ar[dr] &  & {1}\ar[dr]\\
 & {1}\ar[rr] &  & {1}
}
\]
}{\tiny \par}

\noindent filled with zero and identity arrows
\item $s_{k,p}^{n}$ give rise to $p-1$ cubes as in the left and $i-p+1$
cubes such as the one at right:

{\tiny{}
\[
\xymatrix{{\mathbb{N}}\ar[rr]\ar[dd]\ar[dr] & \mbox{} & {\mathbb{N}}\ar'[d][dd]\ar[dr]\\
\mbox{} & {\mathbb{N}}\ar'[rr]\ar[dd] &  & {\mathbb{N}}\ar[dd]\\
{\mathbb{N}}\ar'[r][rr]\ar[dr] &  & {\mathbb{N}}\ar[dr]\\
 & {\mathbb{N}}\ar[rr] &  & {\mathbb{N}}
}
\qquad\xymatrix{{1}\ar[rr]\ar[dd]\ar[dr] & \mbox{} & {1}\ar'[d][dd]\ar[dr]\\
\mbox{} & {1}\ar'[rr]\ar[dd] &  & {1}\ar[dd]\\
{1}\ar'[r][rr]\ar[dr] &  & {1}\ar[dr]\\
 & {1}\ar[rr] &  & {1}
}
\]
}{\tiny \par}

\noindent with binary successors and identity arrows,
\end{itemize}
\item Fixing a single object $X$ there are some special objects in the
form\emph{ }

\textbf{
\[
\xymatrix{X\ar[r]\ar[d] & X\ar[r]\ar[d] & \cdots\ar[r] & 1\ar[d]\ar[r] & 1\ar[d]\\
X\ar[r] & X\ar[r] & \cdots\ar[r] & 1\ar[r] & 1
}
\]
}

\noindent and denoted by $X^{p,q}$ where the chain above is formed
by $p$ objects $X$ and $i-p$ objects $1$ and the chain below is
formed by $q$ objects $X$ and $i-q$ objects $1$. We call these
objects the \emph{levels of} $X$. 
\item We define preserving endofunctors $T^{S}\textrm{ and }G^{S}$ acting
over the columns of $X^{p,q}$ as:

\noindent 
\[
T^{S}\left[\vcenter{\xymatrix{X\ar[d]\\
1
}
}\right]=\vcenter{\xymatrix{1\ar[d]\\
1
}
}\qquad T^{S}\left[\vcenter{\xymatrix{X\ar[d]\\
X
}
}\right]=\vcenter{\xymatrix{X\ar[d]\\
X
}
}
\]
\[
G^{S}\left[\vcenter{\xymatrix{X\ar[d]\\
1
}
}\right]=\vcenter{\xymatrix{X\ar[d]\\
X
}
}\qquad G^{S}\left[\vcenter{\xymatrix{X\ar[d]\\
X
}
}\right]=\vcenter{\xymatrix{X\ar[d]\\
X
}
}
\]
and in general over every arrow
\noindent \begin{center}
$\xymatrix{X_{0,p}\ar[d]\\
X_{1,p}
}
$
\par\end{center}

\noindent as: 

\noindent 
\[
T^{S}\left[\vcenter{\xymatrix{X_{0,p}\ar[d]\\
X_{1,p}
}
}\right]=\vcenter{\xymatrix{1\ar[d]\\
X_{1,p}
}
}\qquad G^{S}\left[\vcenter{\xymatrix{X_{0,p}\ar[d]\\
X_{1,p}
}
}\right]=\vcenter{\xymatrix{X_{1,p}\ar[d]\\
X_{1,p}
}
}
\]
It is obvious that they preserve all tensor and coproducts.
\item While for endofunctors $M_{p}^{S}$ we have for the rows \textbf{
\[
X^{(p)}=\xymatrix{X\ar[r] & \overset{p}{\cdots}\ar[r] & X\ar[r] & 1\ar[r] & \cdots\ar[r] & 1}
\]
}the following table:

\begin{center}
\begin{tabular}{|c|cccccc}
\cline{2-7} 
\multicolumn{1}{c|}{} & \multicolumn{1}{c|}{$X^{(0)}$} & \multicolumn{1}{c|}{$X^{(1)}$} & \multicolumn{1}{c|}{...} & \multicolumn{1}{c|}{$X^{(i-3)}$} & \multicolumn{1}{c|}{$X^{(i-2)}$} & \multicolumn{1}{c|}{$X^{(i-1)}$}\tabularnewline
\hline 
$M_{0}^{S}$ & $1^{2\times i}$ & $X^{(1)}$ & ... & $X^{(i-3)}$ & $X^{(i-2)}$ & $X^{(i-1)}$\tabularnewline
\cline{1-1} 
$M_{1}^{S}$ & $X^{(0)}$ & $X^{(0)}$ & ... & $X^{(i-3)}$ & $X^{(i-2)}$ & $X^{(i-1)}$\tabularnewline
\cline{1-1} 
$\vdots$ & $\vdots$ & $\vdots$ & $\vdots$ & $\vdots$ & $\vdots$ & $\vdots$\tabularnewline
\cline{1-1} 
$M_{i-1}^{S}$ & $X^{(0)}$ & $X^{1}$ & ... & $X^{(i-3)}$ & $X^{(i-3)}$ & $X^{(i-1)}$\tabularnewline
\cline{1-1} 
\end{tabular} 
\par\end{center}

\noindent \begin{flushleft}
while in general $M_{p}^{S}$ acts over every chain
\par\end{flushleft}

\noindent \begin{center}
\hspace{2em}\xymatrix{X_{0,0}\ar[r]^-{h_{0}} & X_{0,1}\ar[r]^-{h_{1}} & \cdots\ar[r]^-{h_{i-2}} & X_{0,(i-1)}}
\par\end{center}

\noindent as: 
\noindent \begin{center}
{\tiny{}\hspace{2em}\xymatrix{X_{0,0}\ar[r]^-{h_{0}} & \cdots\ar[r]^-{h_{i-p}} & X_{0,i-p+1}\ar[r]^-{id} & X_{0,i-p+1}\ar[r]^-{t} & X_{0,i-p+3}\ar[r]^-{h_{i-p+3}} & \cdots\ar[r]^-{h_{i-2}} & X_{0,(i-1)}}}
\par\end{center}{\tiny \par}

\noindent where $t=h_{i-p+2}\circ h_{i-p+1}$, that is, it repeats
the $(i-p+1)-term$. It is obvious that it preserves all tensor and
coproducts. 
\item We define bifunctors $\Im_{Rec}:\Delta^{op}(\mathbf{2},\mathbf{2})\rightarrow(Set_{P}^{2\times i},Set_{P}^{2\times i})$
and $\Im_{Min}:\Delta^{op}(\mathbf{i},\mathbf{i})\rightarrow(Set_{P}^{2\times i},Set_{P}^{2\times i})$
sending $T$, $G$, $\eta$, $\epsilon$ and $M_{p}$ to the respective
endofunctors and natural transformations for $Set_{P}^{2\times i}$.
\item The category of coalgebras for the endofunctor sending an object $X$
to $1\oplus X$ in $Set_{P}^{2\times i}$ is endowed with a number
of isomorphic terminal objects (see section 4).
\end{itemize}

\section{Distributivity}

In this section we prove that, as a consequence of the previous definition,
we are actually endowing $\mathcal{C}$ with a structure of distributive
monoidal category where the distributive arrows $d$ are uniquely
defined by an application of safe ramified recursion: 
\begin{center}
{\scriptsize{}\hspace{2em}\xymatrix{\top\otimes(X\oplus Y)\ar[rr]^-{0\otimes(X\oplus Y)}\ar[d]_{l} &  & N_{1,p}\otimes(X\oplus Y)\ar[rr]^-{s\otimes(X\oplus Y)}\ar[d]^{d_{N_{1,p},X,Y}} &  & N_{1,p}\otimes(X\oplus Y)\ar[d]^{d_{N_{1,p},X,Y}}\\X\oplus Y\ar[rr]_-{(0\otimes X)l_{X}^{-1}\oplus(0\otimes Y)l_{Y}^{-1}} &  & (N_{1,p}\otimes X)\oplus(N_{1,p}\otimes Y)\ar[rr]_-{(s\otimes X)\oplus(s\otimes Y)} &  & (N_{1,p}\otimes X)\oplus(N_{1,p}\otimes Y)}}
\par\end{center}{\scriptsize \par}

The arrows $d$ are actually isomorphisms because of the identities
contained in the following result where the inclusions appearing are
both isomorphisms. We omit subscripts in the sequel.
\begin{prop*}
\label{dist}For every $m$ and arrows $x:\top\rightarrow X,y:\top\rightarrow Y$
the following diagrams commute:
\end{prop*}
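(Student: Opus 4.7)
The plan is to exploit the uniqueness clause in the $SRR$ characterization of $d$. Since both diagrams in the proposition are parametrized by $m$, and since $d$ itself was introduced as the unique $SRR(g_d,h_d)$-solution for
\[
g_d=(0\otimes X)l_X^{-1}\oplus(0\otimes Y)l_Y^{-1},\qquad h_d=(s\otimes X)\oplus(s\otimes Y),
\]
each side of each asserted identity should be reread as a morphism $N_{1,p}\otimes\top\to (N_{1,p}\otimes X)\oplus(N_{1,p}\otimes Y)$ satisfying an $SRR$ scheme, and then equality will follow by uniqueness. Concretely, for the diagram that handles the left injection $in_{l}\circ x$ I would feed the point $x:\top\to X$ into the $SRR$ scheme defining $d$, obtaining a composite $d\circ(N_{1,p}\otimes(in_{l}\circ x))$, and show it satisfies the same $SRR$ scheme as the simpler morphism $in_{l}\circ(N_{1,p}\otimes x)\circ l^{-1}$; symmetrically for $y$ and $in_r$.

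The verification splits into the two standard $SRR$ checks. \emph{Base case:} compose both candidate morphisms with $0_{1,p}\otimes\top$ and verify that the result equals the common base arrow $in_l\circ(0\otimes X)l_X^{-1}\circ(\top\otimes x)\circ l^{-1}$ (and its $y$-analogue); this uses the on-the-nose preservation of $\otimes$ and $\oplus$ together with naturality of $l$. \emph{Step case:} compose both candidates with $s_{1,p}^{n}\otimes\top$ and verify that each intertwines with $h_d$ restricted to the left (resp.\ right) summand, i.e.\ with $s\otimes X$ (resp.\ $s\otimes Y$); this is where the functoriality of $-\oplus-$ and the fact that $in_l,in_r$ are natural transformations do the work, turning $(s\otimes X)\oplus(s\otimes Y)$ precomposed with $in_l$ into $in_l\circ(s\otimes X)$. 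Once both candidates are shown to fit the same $SRR$ scheme (with $Y$ replaced by $(N_{1,p}\otimes X)\oplus(N_{1,p}\otimes Y)$, which lies in the appropriate fiber of $T^{\mathcal{C}}$ over $\top$ because $T^{\mathcal{C}}$ preserves $\otimes$, $\oplus$, and $T^{\mathcal{C}}N_{1,p}=N_{1,p}$), the uniqueness part of $SRR$ forces them to coincide, yielding commutativity for every $m$.

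The two claimed identities then immediately give that the composites $d\circ(N_{1,p}\otimes in_l)$ and $d\circ(N_{1,p}\otimes in_r)$ equal $in_l\circ(N_{1,p}\otimes X)$ and $in_r\circ(N_{1,p}\otimes Y)$ on points, which is exactly the content needed to recognize both summand inclusions as sections of $d$ — hence the parenthetical remark that these inclusions are isomorphisms. I expect the main obstacle to be purely bureaucratic: keeping track of the left-unitor $l$ and the coherence between $\top\otimes(-)$, $(-)\otimes\top$, the coproduct injections, and the strict preservation properties declared for $T^{\mathcal{C}}$ and $G^{\mathcal{C}}$. Since the paper explicitly works modulo associativity and symmetry and insists on on-the-nose preservation of $\otimes$ and $\oplus$, this bookkeeping reduces to routine symbol-pushing once the two $SRR$ schemes are written out side by side, so I do not anticipate a conceptual difficulty beyond the $SRR$-uniqueness argument itself.
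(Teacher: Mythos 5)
Your argument is correct, but it takes a genuinely different route from the paper's. The paper proves the proposition by induction on $m$: for $m=1$ it pastes together cells $\boldsymbol{\alpha},\boldsymbol{\beta},\boldsymbol{\gamma},\boldsymbol{\delta}$ (with $\boldsymbol{\delta}$ further split into $\boldsymbol{\epsilon},\boldsymbol{\eta}$), and the inductive step is dispatched in one line ``by composition''; the ingredients are the same ones you use --- the base and step squares of the $SRR$ diagram defining $d$ --- but applied pointwise at the numerals $\hat{m}$. You instead invoke the \emph{uniqueness} clause of $SRR$ once: both $d\circ(N_{1,p}\otimes(in_{l}\circ x))$ and $in_{l}\circ(N_{1,p}\otimes x)$, as morphisms $N_{1,p}\otimes\top\rightarrow(N_{1,p}\otimes X)\oplus(N_{1,p}\otimes Y)$, satisfy the same scheme (same $g$, same $h_{d}=(s\otimes X)\oplus(s\otimes Y)$), hence coincide, and precomposing with $\hat{m}\otimes\top$ gives the proposition for all $m$ simultaneously. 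This buys two real advantages: the meta-level induction over $\mathbb{N}$ disappears (it is absorbed into uniqueness), and you in fact obtain the stronger point-free identity $d\circ(N_{1,p}\otimes in_{l})=in_{l}$ --- which is precisely the cell that the paper's own proof leaves unjustified (it is the unlabelled left-hand portion of the diagram invoked to show that $\boldsymbol{\eta}$ commutes). In that sense your argument is not only different but repairs the weakest step of the published proof.

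Two caveats. First, your parenthetical justification of the fiber condition does not hold up under a literal reading: since $T^{\mathcal{C}}N_{1,p}=N_{1,p}$ and $T^{\mathcal{C}}$ preserves $\otimes$ and $\oplus$ on the nose, one gets $T^{\mathcal{C}}\bigl((N_{1,p}\otimes X)\oplus(N_{1,p}\otimes Y)\bigr)=(N_{1,p}\otimes T^{\mathcal{C}}X)\oplus(N_{1,p}\otimes T^{\mathcal{C}}Y)$, which is not $\top$. However, this codomain is exactly the one the paper itself uses when it defines $d$ by $SRR$, so whatever reading of ``fiber of $T^{\mathcal{C}}$ over $\top$'' legitimizes the definition of $d$ legitimizes your two applications of $SRR$ as well; the gap is inherited from the paper, not created by you. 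Second, the uniqueness hypothesis requires the scheme to hold for both successors $n=1,2$; this is immediate for both of your candidate morphisms because the defining square of $d$ holds for both (the paper's footnote collapses $s^{1},s^{2}$ into $s$), but it should be stated as part of the verification.
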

\begin{center}
{\scriptsize{}\hspace{2em}\xymatrix{\top\otimes\top\ar[r]^-{\hat{m}\otimes(in_{l}\circ x)}\ar[d]_-{\hat{m}\otimes x} & N_{1,p}\otimes(X\oplus Y)\ar[d]^-{d}\\N_{1,p}\otimes X\ar[r]_-{in_{l}} & (N_{1,p}\otimes X)\oplus(N_{1,p}\otimes Y)}\xymatrix{\top\otimes\top\ar[r]^-{\hat{m}\otimes(in_{r}\circ y)}\ar[d]_-{\hat{m}\otimes y} & N_{1,p}\otimes(X\oplus Y)\ar[d]^-{d}\\N_{1,p}\otimes Y\ar[r]_-{in_{r}} & (N_{1,p}\otimes X)\oplus(N_{1,p}\otimes Y)}}
\par\end{center}{\scriptsize \par}

\noindent where we denote $\hat{m}$ for the arrows $(s_{1,p})^{m}0_{1,p}:\top\rightarrow N_{1,p}$.\footnote{We do not distinguish between $s^{1}$ and $s^{2}$ and write just
$s$ since it does not make any difference.}
\begin{proof}
We proceed by induction over $m$. 
\begin{itemize}
\item For $m=1$ the following diagram is a composition of commuting diagrams:

\hspace{2em}\xymatrix{ & \top\otimes\top\ar[r]^-{\hat{1}\otimes(in_{l}\circ x)}\ar[d]^-{\top\otimes x}\ar@/_{4pc}/[ddd]_-{\hat{1}\otimes x}\ar@{}[dr]|{\mathstrut\raisebox{3ex}{\ensuremath{(\boldsymbol{\alpha})}\kern0.5em }} & N_{1,p}\otimes(X\oplus Y)\ar@/^{2pc}/[ddd]^{d}\ar@{}[dd]|{\mathstrut\raisebox{2ex}{\ensuremath{(\boldsymbol{\delta})}\kern0.5em }}\\ & \top\otimes X\ar[rd]^{l_{X}}\ar[dd]^-{\hat{1}\otimes X}\ar[ur]_-{\hat{1}\otimes in_{l}}\ar@{}[dl]|{\mathstrut\raisebox{3ex}{\ensuremath{(\boldsymbol{\beta})}\kern0.5em }} & \text{}\\\text{} & \text{} & X\ar[d]^-{g}\\ & N_{1,p}\otimes X\ar[r]_-{in_{l}}\ar@{}[ur]|{\mathstrut\raisebox{1ex}{\ensuremath{(\boldsymbol{\gamma})}\kern0.5em }} & (N_{1,p}\otimes X)\oplus(N_{1,p}\otimes Y)}

\noindent where $g=(s0\otimes X)\oplus(s0\otimes Y)\circ(l_{X}^{-1}\oplus l_{Y}^{-1})\circ in_{l}$.
Diagrams $\boldsymbol{\alpha}$, $\boldsymbol{\beta}$ and $\boldsymbol{\gamma}$
commute by direct inspection, $\boldsymbol{\delta}$ commutes because
it can be expressed in the form 

{\scriptsize{}\hspace{.5em}\xymatrix{\top\otimes X\ar[rr]^-{\hat{1}\otimes X}\ar[d]_-{l_{X}}\ar@{}[drr]|{\mathstrut\raisebox{3ex}{\ensuremath{(\boldsymbol{\epsilon})}\kern0.5em }} &  & N_{1,p}\otimes X\ar[rr]^-{s\otimes in_{l}}\ar[d]^-{in_{l}}\ar@{}[drr]|{\mathstrut\raisebox{3ex}{\ensuremath{(\boldsymbol{\eta})}\kern0.5em }} &  & N_{1,p}\otimes(X\oplus Y)\ar[d]^-{d}\\X\ar[rr]_-{g} &  & (N_{1,p}\otimes X)\oplus(N_{1,p}\otimes Y)\ar[rr]_-{f} &  & (N_{1,p}\otimes X)\oplus(N_{1,p}\otimes Y)}}{\scriptsize \par}

\noindent where $f=(s\otimes X)\oplus(s\otimes Y)$. In it $\boldsymbol{\epsilon}$
commutes trivially and $\boldsymbol{\eta}$ commutes because the following
diagram commutes:
\[
\xymatrix{N_{1,p}\otimes X\ar@/^{2pc}/[dr]^{s\otimes in_{l}}\ar@/_{4pc}/[dd]_{in_{l}}\ar[d]^{N_{1,p}\otimes in_{l}}\\
N_{1,p}\otimes(X\oplus Y)\ar[r]^{s\otimes(X\oplus Y)}\ar[d]_{d} & N_{1,p}\otimes(X\oplus Y)\ar[d]^{d}\\
(N_{1,p}\otimes X)\oplus(N_{k+1}\otimes Y)\ar[r]_{f} & (N_{1,p}\otimes X)\oplus(N_{1,p}\otimes Y)
}
\]

\item If we suppose true our result for $\hat{m}$ the following diagrams
commute
\begin{center}
\hspace{2em}\xymatrix{\top\otimes\top\ar[r]^-{(\widehat{m+1})\otimes(in_{l}\circ x)}\ar[d]_-{(\widehat{m+1})\otimes x} & N_{1,p}\otimes(X\oplus Y)\ar[d]^-{d}\\N_{1,p}\otimes X\ar[r]_-{in_{l}} & (N_{1,p}\otimes X)\oplus(N_{1,p}\otimes Y)}
\par\end{center}

\begin{center}
\hspace{2em}\xymatrix{\top\otimes\top\ar[r]^-{(\widehat{m+1})\otimes(in_{r}\circ y)}\ar[d]_-{(\widehat{m+1})\otimes y} & N_{1,p}\otimes(X\oplus Y)\ar[d]^-{d}\\N_{1,p}\otimes Y\ar[r]_-{in_{r}} & (N_{1,p}\otimes X)\oplus(N_{1,p}\otimes Y)}
\par\end{center}

\noindent by composition.
\end{itemize}
\end{proof}
This result, that justifies the word \emph{distributive} in the name
of the structure, can be extended to whatever power of the levels
of natural numbers. That is, the following diagrams also commute
\begin{center}
\hspace{2em}\xymatrix{\top\otimes\top\ar[rr]^-{\left\langle \widehat{m_{1}},...,\widehat{m_{\alpha}}\right\rangle \otimes(in_{l}\circ x)}\ar[d]_-{\left\langle \widehat{m_{1}},...,\widehat{m_{\alpha}}\right\rangle \otimes x} && N_{p,1}^{\alpha}\otimes(X\oplus Y)\ar[d]^-{d}\\N_{1,p}^{\alpha}\otimes X\ar[rr]_-{in_{l}} && (N_{1,p}^{\alpha}\otimes X)\oplus(N_{1,p}^{\alpha}\otimes Y)}
\par\end{center}

\begin{center}
\hspace{2em}\xymatrix{\top\otimes\top\ar[rr]^-{\left\langle \widehat{m_{1}},...,\widehat{m_{\alpha}}\right\rangle \otimes(in_{r}\circ y)}\ar[d]_-{\left\langle \widehat{m_{1}},...,\widehat{m_{\alpha}}\right\rangle \otimes y} && N_{1,p}^{\alpha}\otimes(X\oplus Y)\ar[d]^-{d}\\N_{1,p}^{\alpha}\otimes Y\ar[rr]_-{in_{r}} && (N_{1,p}^{\alpha}\otimes X)\oplus(N_{1,p}^{\alpha}\otimes Y)}
\par\end{center}

\noindent where we consider arrows
\[
\left\langle \widehat{m_{1}},...,\widehat{m_{\alpha}}\right\rangle :\top\rightarrow N_{1,p}^{\alpha}
\]

We have in fact the following relations into a Distributive i-Minimization
Comprehension $(\mathcal{C},T,G,\eta,\epsilon,M_{p})$, ensuring coherence
in the complexity growing structure:

\[
Td_{N_{0,p},X,Y}=d_{\top,TX,TY}\qquad Gd{}_{N_{0,p},X,Y}=d_{N_{1,p},GX,GY}
\]

\[
M_{p}d_{N_{k,p},X,Y}=d_{N_{k,p-1},M_{p}X,M_{p}Y}
\]

\noindent for $k\in\mathbf{2}$, $p\in\mathbf{i}\setminus\{0\}$ and
every $X,Y\in\mathcal{C}$.

\section{Coalgebras and partiality}

In this section we treat partiality in a Distributive i-Minimization
Comprehension. We start from the well known idea that the initial
algebra $(\mathbb{N},1\oplus\mathbb{N}\overset{0\oplus s}{\longrightarrow}\mathbb{N})$
of the endofunctor $1\oplus-$ over the category $Set$ turns out
to be a strong natural numbers object (\emph{nno} in the sequel) where
the uniqueness condition included into it has its counterpart into
the uniqueness of the nno: the equations obtained through a diagram 
\begin{center}
\hspace{2em}\xymatrix{1\oplus{\mathbb{N}}\ar[r]^-{0\oplus{s}}\ar[d]_-{1\oplus{h}} & \mathbb{N}\ar[d]^-{h}\\1\oplus{A}\ar[r] & A}
\par\end{center}

\noindent for every other algebra $(A,1\oplus A\rightarrow A)$ are
equivalent to those obtained in a nno diagram.

It is precisely from duality that we obtain partiality for our recursive
arrows: the terminal coalgebra for $1\oplus-$ over $Set_{P}$ gives
a categorical intuition of minimization (see \cite{key-7}). Let us
denote $F:Set_{P}\rightarrow Set_{P}$ the endofunctor such that $FA=1\oplus A$
and its terminal coalgebra $(\mathbb{N},\mathbb{N}\overset{\alpha}{\longrightarrow}1\oplus\mathbb{N})$
where $\alpha$ turns out to be the isomorphism $(0\oplus s)^{-1}$. 

We spell out the details involved in this construction for the case
of $Set_{P}$. For arrows $h_{1}$, $h_{2}$ and $f:\mathbb{N}^{\alpha_{1}}\otimes\mathbb{N}^{\alpha_{0}}\longrightarrow\mathbb{N}$
the usual coalgebra diagram gives a unique $\mu f:\mathbb{N}^{\alpha_{1}}\otimes\mathbb{N}^{\alpha_{0}}\longrightarrow\mathbb{N}$
such that the following diagrams commute
\begin{center}
\hspace{2em}\xymatrix{\mathbb{N}^{\alpha_{1}}\otimes\mathbb{N}^{\alpha_{0}}\ar[rr]^{h_{n}}\ar[d]_{\mu f} & & 1\oplus(\mathbb{N}^{\alpha_{1}}\otimes\mathbb{N}^{\alpha_{0}})\ar[d]^{1\oplus\mu f}\\\mathbb{N}\ar[rr]_{(0\oplus s^{n})^{-1}} & & 1\oplus\mathbb{N}}
\par\end{center}

\noindent The arrow $\mu f$ is in this way defined by an analogous
of Kleene's minimization from the partial function $f$ (see \cite{key-7}).

For our example \ref{ex} of Distributive i-Minimization Comprehension
$Set_{P}^{2\times i}$ described above we can investigate which is
the form of the objects in the category of coalgebras for the endofunctor
$F$ analogous to the previous one. 

Let $F^{2\times i}:Set_{P}^{2\times i}\longrightarrow Set_{P}^{2\times i}$
be such that
\[
F^{2\times i}\left[\vcenter{\xymatrix{X_{0,0}\ar[r]\ar[d] & X_{0,1}\ar[r]\ar[d] & \cdots\ar[r] & X_{0,(i-1)}\ar[d]\\
X_{1,0}\ar[r] & X_{1,1}\ar[r] & \cdots\ar[r] & X_{1,(i-1)}
}
}\right]=
\]
\[
=\vcenter{\xymatrix{1\oplus X_{0,0}\ar[r]\ar[d] & 1\oplus X_{0,1}\ar[r]\ar[d] & \cdots\ar[r] & 1\oplus X_{0,(i-1)}\ar[d]\\
1\oplus X_{1,0}\ar[r] & 1\oplus X_{1,1}\ar[r] & \cdots\ar[r] & 1\oplus X_{1,(i-1)}
}
}
\]
then the category $CoAlg\:F^{2\times i}$ has:
\begin{itemize}
\item as objects pairs $(X,\alpha)$ where $X$ is of the form 
\[
\xymatrix{X_{0,0}\ar[r]\ar[d] & X_{0,1}\ar[r]\ar[d] & \cdots\ar[r] & X_{0,(i-1)}\ar[d]\\
X_{1,0}\ar[r] & X_{1,1}\ar[r] & \cdots\ar[r] & X_{1,(i-1)}
}
\]
and $\alpha=(\alpha_{0,0},...,\alpha_{0,i-1},\alpha_{1,0},...,\alpha_{1,i-1})$
with $\alpha_{k,p}:X_{k,p}\rightarrow1\oplus X_{k,p}$ for $k\in\mathbf{2}$
and $p\in\mathbf{i}$;
\item as arrows $(X,\alpha)\overset{f}{\longrightarrow}(Y,\beta)$ such
that $\beta\circ f=(F^{2\times i}f)\circ\alpha$.
\end{itemize}
Since $Set_{P}^{2\times i}$ is an indexed category over $Set_{P}$
and $CoAlg\:F$ has as terminal object $(\mathbb{N},(0\oplus s)^{-1})$
the category $CoAlg\:F^{2\times i}$ has isomorphic terminal objects
in the form of pairs formed by 
\begin{itemize}
\item objects in the form
\[
\xymatrix{\mathbb{N}_{0,0}\ar[r]\ar[d] & \cdots\ar[r] & \mathbb{N}_{0,p}\ar[r]\ar[d] & 1\ar[r]\ar[d] & \cdots\ar[r] & 1\ar[d]\\
\mathbb{N}_{1,0}\ar[r] & \cdots\ar[r] & \mathbb{N}_{1,p}\ar[r] & 1\ar[r] & \cdots\ar[r] & 1
}
\]
which by definition of Distributive i-Minimization Comprehension belong
to the fiber of $M_{i-1}^{S}...M_{0}^{S}$ over \textbf{
\[
\xymatrix{1\ar[r]\ar[d] & 1\ar[r]\ar[d] & \cdots\ar[r] & 1\ar[d]\\
1\ar[r] & 1\ar[r] & \cdots\ar[r] & 1
}
\]
}
\item and cubes given by $2i$ arrows in the form $(0_{k,p}\oplus s_{k,p}^{n})^{-1}:\mathbb{N}_{k,p}\rightarrow1\oplus\mathbb{N}_{k,p}$
for $k\in\mathbf{2}$ and $p\in\mathbf{i}$.
\end{itemize}
That is, $F^{2\times i}$ can be endowed with a bounded terminal coalgebra.
We have in this way a safe minimization operator, which is total according
to \cite{key-7}, and is applied i-times as maximum in a Distributive
i-Minimization Comprehension of partial functions.

\section{The free Distributive i-Minimization Comprehension}

By endowing the \emph{initial symmetric monoidal category with coproducts}
with all initial diagrams, the required recursion schemes and the
terminal condition for coalgebras to obtain the minimization operator,
we construct the \emph{free Distributive i-Minimization Comprehension
}which we\emph{ }denote $\mathcal{DM}^{i}$\emph{.} The objects in
$\mathcal{DM}^{i}$ are of the form $\bigoplus(\underset{k\in\mathbf{2},p\in\mathbf{i}}{\bigotimes}N_{k,p})$,
that is, coproducts of finite tensor products of the objects $N_{k,p}$
defined above. Moreover, it can be proved in this case that the tensor
turns out to be a cartesian product. We have in this sense the following:
\begin{thm*}
\label{cartesian}$\mathcal{DM}^{i}$ is a Distributive category.
\end{thm*}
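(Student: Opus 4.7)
The plan is to establish that $\mathcal{DM}^{i}$ has finite products given by the monoidal structure and that these products distribute over the coproducts. Since distributivity of $\otimes$ over $\oplus$ is essentially the content of the proposition of Section~3, the main task is to prove that the tensor $\otimes$ coincides with a cartesian product in $\mathcal{DM}^{i}$, i.e.\ that $\top$ is terminal and that each $X\otimes Y$ carries a natural product structure.

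First I would show that $\top$ is terminal. By the inductive definition of $\mathcal{DM}^{i}$ as the free such structure, every object is a coproduct of iterated tensors of the generators $N_{k,p}$, so by the universal property of coproducts it suffices to produce a unique arrow $!_{A}\colon A\to\top$ for each generator. Such an arrow is supplied by the flat recursion scheme when $k=0$ (taking the target to be $Y=\top=N_{0,p}^{0}$, which is permitted) and by applying $G^{\mathcal{C}}$ to that arrow when $k=1$. Uniqueness in the free setting follows from the fact that the initial diagrams for $0_{k,p}$ and $s_{k,p}^{n}$ force any two parallel arrows into $\top$ to agree on constants and successors, hence everywhere.

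Next I would construct the projections and pairings. Given terminality, the candidate projections are $\pi_{1}=l\circ(!_{Y}\otimes X)\colon Y\otimes X\to X$ and its symmetric counterpart $\pi_{2}$. To obtain pairings, I would first produce a diagonal $\Delta_{N_{k,p}}\colon N_{k,p}\to N_{k,p}\otimes N_{k,p}$ by an application of $SRR$ on the generator (using flat recursion for $k=0$ and safe ramified recursion for $k=1$, after passing through $G^{\mathcal{C}}$); this then extends by functoriality of $\otimes$ and by the universal property of coproducts to a diagonal $\Delta_{A}\colon A\to A\otimes A$ for every object. The pairing of $f\colon Z\to X$ and $g\colon Z\to Y$ is then $\langle f,g\rangle=(f\otimes g)\circ\Delta_{Z}$, and the equations $\pi_{1}\circ\langle f,g\rangle=f$ and $\pi_{2}\circ\langle f,g\rangle=g$ follow from the defining recursion diagrams together with terminality of $\top$.

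With products in hand, the diagrams of the proposition in Section~3 are exactly the statement that the canonical map $(N_{1,p}^{\alpha}\otimes X)\oplus(N_{1,p}^{\alpha}\otimes Y)\to N_{1,p}^{\alpha}\otimes(X\oplus Y)$ is inverse to $d$, so the product distributes over the coproduct on each fiber of the generators; the identities relating $T^{\mathcal{C}},G^{\mathcal{C}},M_{p}^{\mathcal{C}}$ with $d$ recorded at the end of Section~3 propagate this to arbitrary objects. The hard part, and the obstacle I would spend most effort on, is proving \emph{uniqueness} of the pairing: arrows in the free category have no closed description, so one must argue by simultaneous induction over the term structure of objects and over the equational theory generated by the flat recursion and $SRR$ schemes, checking that any arrow $h\colon Z\to X\otimes Y$ with $\pi_{i}\circ h$ prescribed is forced to equal $\langle\pi_{1}h,\pi_{2}h\rangle$. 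Once that coherence is in place, the theorem follows by combining terminality of $\top$, existence and uniqueness of pairings, and the distributivity of Section~3.
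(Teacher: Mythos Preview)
Your overall architecture matches the paper's: the proof there is literally the one line ``See \cite{key-4} together with Proposition~\ref{dist}'', i.e.\ cartesianness of the free structure is imported from the earlier paper on Symmetric Monoidal Comprehensions, and distributivity is exactly the content of Section~3. So your use of Section~3 for the distributive law is on target, and what you are really doing is attempting to reconstruct the argument of \cite{key-4} for the cartesian part.

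There is, however, a concrete gap in that reconstruction. Your plan is to manufacture the diagonals $\Delta_{N_{k,p}}$ from the recursion schemes, using flat recursion for $k=0$. But look at the $FR$ diagram: the step arrow is $h:N_{0,p}\otimes X\to Y$ and the defining equation is $FR(g,h)\circ(s^{n}\otimes X)=h$, so $h$ receives only the predecessor and the parameters, \emph{not} the recursive value $FR(g,h)$. Flat recursion is therefore just a zero/successor case split, and to define $\Delta:N_{0,p}\to N_{0,p}\otimes N_{0,p}$ you would need $h(m)=\bigl(s^{n}m,s^{n}m\bigr)$, which already presupposes a copy of $m$. The $SRR$ scheme does give genuine iteration, but its codomain is constrained to the fiber of $T^{\mathcal{C}}$ over $\top$, which excludes $N_{1,p}\otimes N_{1,p}$; one can salvage $\Delta_{N_{1,p}}$ by building $N_{1,p}\to N_{0,p}\otimes N_{0,p}$ via $SRR$ and then applying $G^{\mathcal{C}}$, but there is no evident route back to a diagonal on $N_{0,p}$ because the available comparison $\epsilon_{N_{0,p}}:N_{1,p}\to N_{0,p}$ points the wrong way.

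In \cite{key-4} the cartesianness of the free comprehension is obtained by a freeness/syntactic argument rather than by directly assembling diagonals from the recursion operators, and that is what the present paper is invoking. Your identification of ``uniqueness of pairing'' as the hard step is reasonable, but the obstruction appears already at the \emph{existence} of the diagonal on the safe generator $N_{0,p}$; you should replace the ``flat recursion gives $\Delta_{N_{0,p}}$'' step by an appeal to the term model of the free structure (or simply cite \cite{key-4}), after which your use of Proposition~\ref{dist} completes the proof exactly as the paper does.
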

\begin{proof}
See \cite{key-4} together with Proposition \ref{dist}.
\end{proof}
This result allows us to obtain the projection functions belonging
to the Polynomial Hierarchy as defined in the Introduction.

It is precisely $Set_{P}^{2\times i}$ from our example \ref{ex}
that particular Distributive i-Minimization Comprehension in which
we can represent the functions belonging to the i-level of the Polynomial
Hierarchy $\square_{i+1}^{P}$. As in previous studies (\cite{key-4,key-6}
for example) the image of $\mathcal{DM}^{i}$ in $Set_{P}^{2\times i}$
through the Freyd Cover will turn out to be exactly $\square_{i+1}^{P}$.
\begin{defn*}
\label{gamma}The\emph{ standard model of formal morphisms} is the
functor $\Gamma_{i}$ given by the diagram
\[
\xymatrix{\mathcal{DM}^{i}\ar[r]^{\Gamma_{i}} & Set_{P}^{2\times i}}
\]
as an $\mathbf{i}-$indexed version of the \emph{Freyd Cover} for
the functor $\Gamma:\mathcal{DM}^{i}\longrightarrow Set_{P}$ defined
by $\Gamma X=\mathcal{DM}^{i}(\top,X)$ and $\Gamma f=f\circ-$.\footnote{This is a special case of the \emph{global sections functor}.}
\end{defn*}
The syntactical structure here described is connected with the semantics
of numerical functions in the sense that every arrow $\top\rightarrow N_{k,p}$
in $\mathcal{DM}^{i}$ has the form $(s_{k,p}^{n})^{m}0_{k,p}$ for
some $m\in\mathbb{N}$. In connection with this we have the following
result.
\begin{prop*}
\emph{\label{std}}$\mathbb{N}_{k,p}=\{std_{k,p}m/m\in\mathbb{N}\}$
where $std_{k,p}:\mathbb{N}\rightarrow\mathbb{N}_{k,p}$ are defined
by the schemes 
\[
\begin{cases}
std_{k,p}0=0_{k,p}\\
std_{k,p}(s^{n}m)=s_{k,p}(std_{k,p}m)
\end{cases}
\]
with $k\in\mathbf{2}$ and $p\in\mathbf{i}$.
\end{prop*}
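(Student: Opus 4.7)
The plan is to establish the equality as two inclusions, with surjectivity of $std_{k,p}$ being the substantive direction. The statement is essentially a reformulation of the normal-form observation made just before the proposition: every arrow $\top\to N_{k,p}$ in $\mathcal{DM}^{i}$ has the form $(s_{k,p}^{n})^{m}0_{k,p}$.

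First, I would prove by induction on $m$ that
\[
std_{k,p}(m)=(s_{k,p}^{n})^{m}\circ 0_{k,p}\;:\;\top\longrightarrow N_{k,p}.
\]
The base case $m=0$ is the defining clause $std_{k,p}(0)=0_{k,p}$, and the inductive step follows directly from the recursive clause $std_{k,p}(s^{n}m)=s_{k,p}(std_{k,p}m)$. This yields a well-defined function $\mathbb{N}\to\mathcal{DM}^{i}(\top,N_{k,p})$ whose image lies inside $\mathbb{N}_{k,p}$, so the inclusion $\{std_{k,p}m:m\in\mathbb{N}\}\subseteq\mathbb{N}_{k,p}$ is immediate.

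For the reverse inclusion I would proceed by structural induction on an arbitrary formal term $\top\to N_{k,p}$ in the free category $\mathcal{DM}^{i}$. Since the only generators with codomain $N_{k,p}$ are $0_{k,p}$ and $s_{k,p}^{n}$, what needs to be checked is that each of the schemes \emph{flat recursion}, \emph{safe ramified recursion}, and \emph{coalgebraic minimization}, when producing an arrow of type $\top\to N_{k,p}$, does so through a composition that can be rewritten as an iterate $(s_{k,p}^{n})^{m}0_{k,p}$. For $FR$, the domain $N_{0,p}\otimes\top$ collapses to $N_{0,p}$ and the universal cocone forces the output to coincide with a finite iterate of $h$ on $g\circ l$, handled by the induction hypothesis on $g$ and $h$; for $SRR$ the commuting square at $\top$ reduces the value to $h^{m}\circ g\circ l$, which when $Y=N_{k,p}$ is again an iterate of $s_{k,p}^{n}$ on $0_{k,p}$; and for minimization the bounded terminal coalgebra condition forces the resulting global section to be a finite sequence of $s_{k,p}^{n}$ applied to $0_{k,p}$, since no nontrivial infinite branches exist in the fiber of $M_{i-1}^{\mathcal{C}}\cdots M_{0}^{\mathcal{C}}$ over $\top$.

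The main obstacle is making this term induction watertight, in particular ensuring that no composition mixing $FR$, $SRR$, and $\mu f$ yields an exotic global section outside the enumeration $\{(s_{k,p}^{n})^{m}0_{k,p}\}$. This is best organised as a normal form theorem for arrows of the free Distributive $i$-Minimization Comprehension, analogous to the corresponding result for the free Symmetric Monoidal Comprehension established in \cite{key-4}: push all occurrences of $0_{k,p}$ and $s_{k,p}^{n}$ to the outside and absorb the remaining categorical structure via the universal properties of the recursion and minimization schemes. Once this normal form is in hand, the two inclusions combine to give $\mathbb{N}_{k,p}=\{std_{k,p}m:m\in\mathbb{N}\}$ as required.
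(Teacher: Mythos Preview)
The paper does not supply a proof of this proposition at all: it is stated without a proof environment, and the only justification is the sentence immediately preceding it, namely the bare assertion that ``every arrow $\top\to N_{k,p}$ in $\mathcal{DM}^{i}$ has the form $(s_{k,p}^{n})^{m}0_{k,p}$ for some $m\in\mathbb{N}$''. That assertion is itself not argued in the paper; it is effectively imported from the analogous development for Symmetric Monoidal Comprehensions in \cite{key-4}. So there is nothing in the paper to compare your argument against beyond that one-line claim.

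Your proposal is therefore strictly more explicit than what the paper offers, and its overall shape is the right one: the easy inclusion is the computation $std_{k,p}(m)=(s_{k,p}^{n})^{m}0_{k,p}$, and the substantive inclusion is exactly the normal-form statement the paper asserts. You are also right that the real work is a normal-form theorem for global sections in the free structure, and that this is where one must lean on \cite{key-4}. Where your sketch is thin is in the case analysis: for $SRR$ you write that the value $h^{m}\circ g\circ l$ ``is again an iterate of $s_{k,p}^{n}$ on $0_{k,p}$'', but $h:Y\to Y$ is an arbitrary arrow, not $s_{k,p}^{n}$, so this step genuinely requires the induction hypothesis on $h$ (and on the numeral plugged into the recursion variable), not just a syntactic observation; similarly your minimization case appeals to ``no nontrivial infinite branches'', which is really the statement that the coalgebra map on a global section unwinds in finitely many steps, and that too needs the induction hypothesis on the component arrows $h_{n}$. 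None of this is wrong, but as written these steps are assertions rather than arguments. Since the paper itself gives no argument, your outline is already more than the paper provides; to make it a proof you would need to carry out the term induction carefully, or cite the corresponding result in \cite{key-4} and explain why the additional coproduct and coalgebra structure of $\mathcal{DM}^{i}$ does not create new global sections of $N_{k,p}$.
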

\begin{cor*}
$\Gamma N_{k,p}=\mathbb{N}_{k,p}$ for $k\in\mathbf{2}$ and $p\in\mathbf{i}$.
\end{cor*}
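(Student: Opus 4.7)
The plan is to identify $\Gamma N_{k,p}=\mathcal{DM}^{i}(\top,N_{k,p})$ with the set $\{(s_{k,p}^{n})^{m}0_{k,p}:m\in\mathbb{N}\}$, and then compare this directly with the description of $\mathbb{N}_{k,p}$ supplied by Proposition \ref{std}. Since the corollary is a one-step consequence of the previous proposition together with the syntactic remark preceding it, most of the work is bookkeeping.

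First I would invoke the observation made in the paragraph just before Proposition \ref{std}: every arrow $\top\to N_{k,p}$ in $\mathcal{DM}^{i}$ has the form $(s_{k,p}^{n})^{m}\circ 0_{k,p}$ for some $m\in\mathbb{N}$. A proof proceeds by structural induction on the formal terms of the free Distributive $i$-Minimization Comprehension. The only generators with codomain $N_{k,p}$ and domain $\top$, up to the fixed structural isomorphisms and whiskering by the monoidal/coproduct structure, are $0_{k,p}$ itself and iterated post-composition by $s_{k,p}^{n}$. The remaining closure operations (safe composition, flat recursion $FR$, safe ramified recursion $SRR$, and the coalgebraic minimization $\mu$) either require a non-terminal domain on some factor, or by their universal properties reduce to such iterations when the input is a global section. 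This is precisely the analogue, in the free setting with minimization, of the argument already carried out in \cite{key-4} for the symmetric monoidal comprehension, so I would cite that argument rather than redo it.

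Second, an immediate induction on $m$ using the defining equations $std_{k,p}(0)=0_{k,p}$ and $std_{k,p}(s^{n}m)=s_{k,p}(std_{k,p}m)$ gives $std_{k,p}m=(s_{k,p}^{n})^{m}0_{k,p}$. Combining this with the first step and Proposition \ref{std} yields
\[
\Gamma N_{k,p}=\{(s_{k,p}^{n})^{m}0_{k,p}:m\in\mathbb{N}\}=\{std_{k,p}m:m\in\mathbb{N}\}=\mathbb{N}_{k,p},
\]
which is the desired equality.

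The one non-routine point, and the place I expect the main obstacle to sit, is verifying that distinct values of $m$ produce distinct global sections, so that the enumeration does not collapse and the identification of the two sets is genuine. This rests on condition 4 of the definition of Distributive $i$-Minimization Comprehension: the arrow $0_{k,p}\oplus s_{k,p}^{n}$ is an isomorphism in $\mathcal{DM}^{i}$, so $0_{k,p}$ and $s_{k,p}^{n}$ factor through disjoint coproduct summands, and in particular $s_{k,p}^{n}$ is a split monomorphism. A straightforward induction on $m$ then shows $(s_{k,p}^{n})^{m}0_{k,p}\neq(s_{k,p}^{n})^{m'}0_{k,p}$ whenever $m\neq m'$, so the parametrization of $\Gamma N_{k,p}$ is faithful and the equality with $\mathbb{N}_{k,p}$ from Proposition \ref{std} is strict rather than merely up to a quotient.
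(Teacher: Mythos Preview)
Your argument is correct and follows exactly the route the paper intends: the corollary is stated without proof, as an immediate consequence of the syntactic remark that every arrow $\top\to N_{k,p}$ in $\mathcal{DM}^{i}$ has the form $(s_{k,p}^{n})^{m}0_{k,p}$ together with Proposition~\ref{std}. Your additional paragraphs on the structural induction and on injectivity of the numerals go beyond what the paper supplies, but they only make explicit what the paper leaves tacit.
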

This Proposition and its Corollary indicate that the sets generated
by the functor $\Gamma$ applied to the levels of the natural numbers
in $\mathcal{DM}^{i}$ behave as the natural numbers themselves.

Safe composition, as defined in the Introduction, has a representation
in a Distributive Minimization Comprehension by means of diagrams
associated to the natural transformation $\eta$. For an arrow 
\[
f:N_{1,p}^{\alpha}\oplus N_{0,q}^{\beta}\longrightarrow N_{1,r}^{\gamma}
\]
in\emph{ }$\mathcal{DM}^{i}$ we have a commutative diagram in the
form:
\begin{center}
\hspace{2em}\xymatrix{N_{1,p}^{\alpha}\oplus N_{0,q}^{\beta}\ar[r]^-{f}\ar[d]_-{\eta(N_{1,p}^{\alpha}\oplus N_{0,q}^{\beta})} & N_{1,r}^{\gamma}\ar[d]^-{\eta N_{1,r}^{\gamma}}\\T(N_{1,p}^{\alpha}\oplus N_{0,q}^{\beta})\ar[r]_-{Tf} & TN_{1,r}^{\gamma}}
\par\end{center}

This grabs the formulation of safe composition given in the Introduction
because we obtain an expression for morphisms in $\mathcal{DM}^{i}$
with a normal output in terms of other morphisms whose safe inputs
do not have any effect over normal outputs.

\section{Conclusions and future work}

It has been introduced the concept of Distributive i-Minimization
Comprehension to extend the understanding of partiality in subrecursive
functions, that idea has been addressed in \cite{key-3} and in the
context of recursion over arbitrary structures. 

Some of the features of Symmetric Monoidal Comprehensions are inherited
by this new categorical setting, essentially what is related with
the free example. The main novelty of this structure is the double
indexing of the objects relating the two safe operators involved in
the construction of the Polynomial Hierarchy: \emph{safe recursion}
and \emph{safe minimization}. 

The consequence of adding coproducts is a distributive condition which
is satisfied as an application of safe recursive schemes. On the other
hand the terminal diagrams allow to develop minimization and, bounding
the number of these operations that can be computed in every level,
we obtain arrows whose representation in a certain category of sets
are functions in the Polynomial Hierarchy.

There are several lines in which this work could be extended:
\begin{itemize}
\item look for representations of other subrecursive hierarchies of functions
that could be characterized by modifying the concept of Symmetric
Monoidal Comprehension or
\item considering for Distributive i-Minimization Comprehensions, as done
in \cite{key-4} for Symmetric Monoidal Comprehensions, a modal interpretation
of the many-sorted interpretation of recursion introduced primarily
in \cite{key-5}.
\end{itemize}

\end{document}